\numberwithin{equation}{section}
\DeclareMathOperator*{\esssup}{ess\,sup}
\DeclareMathOperator*{\essinf}{ess\,inf}
\newtheorem{theorem}{Theorem}[section]
\newtheorem{corollary}{Corollary}[section]
\newtheorem{lemma}{Lemma}[section]
\newtheorem{proposition}{Proposition}[section]
\newtheorem{definition}{Definition}[section]
\newtheorem{remark}{Remark}[section]
\newcommand{\leref}{Lemma~\ref}
\newcommand{\prref}{Proposition~\ref}
\newcommand{\thref}{Theorem~\ref}
\newcommand{\reref}{Remark~\ref}
\newcommand{\Rho}{{\pmb\rho}}
\newcommand{\Tau}{{\pmb\tau}}
\newcommand{\E}{\mathbb{E}}
\newcommand{\T}{\mathcal{T}}
\newcommand{\bT}{\mathbb{T}}
\newcommand{\eps}{\epsilon}
\newcommand{\cV}{\mathcal{V}}
\title[]{On a Stopping Game in continuous time}
\author[]{Erhan Bayraktar} \thanks{This research was supported in part by the National Science Foundation under grant DMS 0955463.}  
\address{Department of Mathematics, University of Michigan}
\email{erhan@umich.edu}
\author[]{Zhou Zhou}
\address{Department of Mathematics, University of Michigan}
\email{zhouzhou@umich.edu}
\date{\today}
\begin{document}
\keywords{A new type of optimal stopping game, non-anticipative stopping strategies, Dynkin games, saddle point.}
\subjclass[2010]{60G40, 93E20, 91A10, 91A60, 60G07.}
\maketitle
\begin{abstract}
On a filtered probability space $(\Omega,\mathcal{F},P,\mathbb{F}=(\mathcal{F}_t)_{0\leq t\leq T})$, we consider stopper-stopper games $\overline C:=\inf_{\Rho}\sup_{\tau\in\T}\E[U(\Rho(\tau),\tau)]$ and $\underline C:=\sup_{\Tau}\inf_{\rho\in\T}\E[U(\rho,\Tau(\rho))]$ in continuous time, where $U(s,t)$ is $\mathcal{F}_{s\vee t}$-measurable (this is the new feature of our stopping game), $\T$ is the set of stopping times, and $\Rho,\Tau:\T\mapsto\T$ satisfy certain non-anticipativity conditions. We show that $\overline C=\underline C$, by converting these problems into a corresponding Dynkin game. 
\end{abstract}

\section{Introduction}
On a filtered probability space $(\Omega,\mathcal{F},P,\mathbb{F}=(\mathcal{F}_t)_{0\leq t\leq T})$, we consider the zero-sum optimal stopping games 
$$\overline C:=\inf_{\Rho}\sup_{\tau\in\T}\E[U(\Rho(\tau),\tau)]\quad\text{and}\quad\underline C:=\sup_{\Tau}\inf_{\rho\in\T}\E[U(\Rho(\tau),\tau)]$$
in continuous time, where $U(s,t)$ is $\mathcal{F}_{s\vee t}$-measurable, $\T$ is the set of stopping times, and $\Rho,\Tau: \T\mapsto\T$ satisfy certain non-anticipativity conditions. In order to avoid the technical difficulties stemming from the verification of path regularity of some related processes (whether they are right continuous and have left limits), we work within the general framework of optimal stopping developed in \cite{Ko1,Ko2,Ko3}.  We convert the problems into a corresponding Dynkin game, and show that $\overline C=\underline C=V$, where $V$ is the value of the Dynkin game. This result extends \cite{Zhou1} to the continuous-time case and can be viewed as an application of the results in \cite{Ko3}, which weakens the usual path regularity assumptions on the reward processes.

It is worth noting that in \cite{Zhou1} two different types of non-anticipativity conditions are imposed for $\overline C$ and $\underline C$ respectively, for otherwise it can be the case that $\overline C\neq\underline C$. Now in the continuous-time case, we still have this inequality in general (see Remark 2.1).
But by assuming $U$ is right continuous along stopping times in the sense of expectation as in \cite{Ko1}, we are able to show that there is no essential difference between the two types of non-anticipativity conditions. 

The rest of the paper is organized as follows. In the next section, we introduce the setup and the main result. In Section 3, we give the proof of the main result. In section 4, we briefly discuss about the existence of optimal stopping strategies. 
\section{The setup and the main result}
Let $(\Omega,\mathcal{F},\mathbb{F},P)$ be a filtrated probability space, where $\mathbb{F}=(\mathcal{F}_t)_{0\leq t\leq T}$ is the filtration satisfying the usual conditions with $T\in(0,\infty)$ the time horizon in continuous time. Let $\T_t$ and $\T_{t+}$ be the set of $\mathbb{F}$-stopping times taking values in $[t,T]$ and $(t,T]$ respectively, $t\in[0,T)$. Denote $\T_T:=\T_{T+}:=\{T\}$ and $\T:=\T_0$. We shall often omit \lq\lq a.s.\rq\rq\ when a property holds outside a $P$-null set. Recall the definition of admissible families of random variables, e.g., in \cite{Ko1}.
\begin{definition}
A family $\{X(\sigma),\ \sigma\in\T\}$ is admissible if for all $\sigma\in\T$, $X(\sigma)$ is a bounded $\mathcal{F}_\sigma$-measurable random variable, and for all $\sigma_1, \sigma_2\in\T$, $X(\sigma_1)=X(\sigma_2)$ on $\{\sigma_1=\sigma_2\}$.
\end{definition}
\begin{definition}
A family $\{Y(\rho,\tau),\ \rho,\tau\in\T\}$ is biadmissible if for all $\rho,\tau\in\T$, $Y(\rho,\tau)$ is an $\mathcal{F}_{\rho\vee\tau}$-measurable bounded random variable, and for all $\rho_1, \rho_2,\tau_1,\tau_2\in\T$, $Y(\rho_1,\tau_1)=Y(\rho_2,\tau_2)$ on $\{\rho_1=\rho_2\}\cap\{\tau_1=\tau_2\}$.
\end{definition}
Let us also recall the two types of stopping strategies defined in \cite{Zhou1}.
\begin{definition}
$\Rho$ is a stopping strategy of Type I (resp. II), if $\Rho:\ \T\mapsto\T$ satisfies the \lq\lq non-anticipativity\rq\rq\ condition of Type I (resp. II), i.e., for any $\sigma_1,\sigma_2\in\T$, it holds a.s. that
\begin{equation}\label{e1}
\text{either}\quad\Rho(\sigma_1)=\Rho(\sigma_2)\leq\text{(resp. $<$) }\sigma_1\wedge\sigma_2\quad\text{or}\quad \Rho(\sigma_1)\wedge\Rho(\sigma_2)>\text{(resp. $\geq$) }\sigma_1\wedge\sigma_2.
\end{equation}
Denote by $\bT^i$ (resp. $\bT^{ii}$) the set of stopping strategies of Type I (resp. II).
\end{definition}
Below is an interesting property for the non-anticipative stopping strategies of Type I (but not Type II).
\begin{proposition}\label{p1}
For any $\Rho\in\bT^i$,
$$\Rho(\Rho(T))=\Rho(T).$$
\end{proposition}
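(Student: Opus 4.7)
The plan is to apply the defining dichotomy of Type I strategies directly to the pair $\sigma_1=T$, $\sigma_2=\Rho(T)$, and rule out one of the two alternatives by a simple inequality argument.

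More concretely, fix $\Rho\in\bT^i$ and note that $\Rho(T)\in\T$ since $\Rho:\T\mapsto\T$, so plugging $\sigma_1=T$ and $\sigma_2=\Rho(T)$ into condition \eqref{e1} is legitimate. Because $\Rho(T)$ takes values in $[0,T]$, we have $\sigma_1\wedge\sigma_2 = T\wedge\Rho(T)=\Rho(T)$ a.s. The dichotomy then states that a.s.\ either
$$\text{(a)}\quad \Rho(T)=\Rho(\Rho(T))\leq \Rho(T),\qquad\text{or}\qquad \text{(b)}\quad \Rho(T)\wedge\Rho(\Rho(T))>\Rho(T).$$
Alternative (b) is impossible pointwise, because the minimum $\Rho(T)\wedge\Rho(\Rho(T))$ is always bounded above by $\Rho(T)$; hence the strict inequality cannot hold on any point of the sample space, and in particular not on a set of positive probability. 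Therefore alternative (a) must hold a.s., which immediately yields $\Rho(\Rho(T))=\Rho(T)$.

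Since the proof reduces to one clean application of the defining condition and a trivial arithmetic contradiction, there is no real obstacle here. The only mild point worth mentioning is that the same strategy would fail for Type II strategies, where condition (b) would read $\Rho(T)\wedge\Rho(\Rho(T))\geq\Rho(T)$, i.e., it would simply require $\Rho(\Rho(T))\geq\Rho(T)$, which does \emph{not} produce a contradiction; this is consistent with the remark in the statement that the property characterizes Type I (not Type II) stopping strategies.
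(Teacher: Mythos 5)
Your proof is correct and is essentially identical to the paper's own argument: both apply the Type I dichotomy \eqref{e1} to the pair $T$ and $\Rho(T)$, observe that $\sigma_1\wedge\sigma_2=\Rho(T)$, and rule out the second alternative because $\Rho(T)\wedge\Rho(\Rho(T))\leq\Rho(T)$ can never be strictly greater than $\Rho(T)$. Your closing observation about why the argument breaks down for Type II strategies is also accurate and matches the paper's framing of the proposition.
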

\begin{proof}
Since 
$$\Rho(\Rho(T))\wedge\Rho(T)\leq\Rho(T)=\Rho(T)\wedge T,$$
by \eqref{e1} we have that
$$\Rho(\Rho(T))=\Rho(T)\leq\Rho(T)\wedge T.$$
\end{proof}

Let $\{U(\rho,\tau),\ \rho,\tau\in\T\}$ be an biadmissible family. Consider the optimal stopping games
\begin{equation}\notag
\overline A:=\inf_{\Rho\in\bT^i}\sup_{\tau\in\T}\E[U(\Rho(\tau),\tau)]\quad\text{and}\quad\underline A:=\sup_{\Tau\in\bT^i}\inf_{\rho\in\T}\E[U(\rho,\Tau(\rho))].
\end{equation}
and
\begin{equation}\notag
\overline B:=\inf_{\Rho\in\bT^{ii}}\sup_{\tau\in\T}\E[U(\Rho(\tau),\tau)]\quad\text{and}\quad\underline B:=\sup_{\Tau\in\bT^{ii}}\inf_{\rho\in\T}\E[U(\rho,\Tau(\rho))].
\end{equation}
We shall convert the problems into a corresponding Dynkin game. In order to do so, let us introduce two families of random variables that will represent the payoffs in the Dynkin game.
\begin{equation}\label{e3}
V^1(\tau):=\essinf_{\rho\in\T_\tau}\E_\tau[U(\rho,\tau)],\quad\tau\in\T
\end{equation}
and
\begin{equation}\label{e13}
V^2(\rho):=\esssup_{\tau\in\T_\rho}\E_\rho[U(\rho,\tau)],\quad\rho\in\T,
\end{equation}
where $\E_t[\cdot]=\E[\cdot|\mathcal{F}_t]$. Observe that
\begin{equation}\notag
V^1(\sigma)\leq U(\sigma,\sigma)\leq V^2(\sigma),\quad\sigma\in\T.
\end{equation}
Define the corresponding Dynkin game as follows:
\begin{eqnarray}
\overline V&:=&\inf_{\rho\in\T}\sup_{\tau\in\T}\E\left[V^1(\tau) 1_{\{\tau
\leq\rho\}}+V^2(\rho) 1_{\{\tau>\rho\}}\right],\label{e2}\\
\underline V&:=&\sup_{\tau\in\T}\inf_{\rho\in\T}\E\left[V^1(\tau) 1_{\{\tau\leq\rho\}}+V^2(\rho) 1_{\{\tau>\rho\}}\right].\label{e4}
\end{eqnarray}

Recall the (uniform) right continuity in expectation along stopping times defined in, e.g., \cite{Ko1}.
\begin{definition}
An admissible family $\{X(\sigma),\ \sigma\in\T\}$ is said to be right continuous along stopping times in expectation (RCE) if for any $\sigma\in\T$ and any $(\sigma_n)_n\subset\T$ with $\sigma_n\searrow\sigma$, one has 
$$\E[X(\sigma)]=\lim_{n\rightarrow\infty}\E[X(\sigma_n)].$$
\end{definition}
\begin{definition}
A biadmissible family $\{Y(\rho,\tau),\ \rho,\tau\in\T\}$ is said to be uniformly right continuous along stopping times in expectation (URCE) if  $\sup_{\rho,\tau \in \T}\E[Y(\rho,\tau)]<\infty$ and if for any $\rho,\tau\in\T$ and any $(\rho_n)_n,(\tau_n)_n\subset\T$ with $\rho_n\searrow\rho$ and $\tau_n\searrow\tau$, one has 
$$\lim_{n\rightarrow\infty}\sup_{\rho\in\T}\left|\E\left[Y(\rho,\tau)-Y(\rho,\tau_n)\right]\right|=0\quad\text{and}\quad\lim_{n\rightarrow\infty}\sup_{\tau\in\T}\left|\E\left[Y(\rho,\tau)-Y(\rho_n,\tau)\right]\right|=0.$$
\end{definition}
Below is the main result of this paper. 
\begin{theorem}\label{t1}
Assume the biadmissible family $\{U(\rho,\tau),\ \rho,\tau\in\T\}$ is URCE. We have that
$$\overline A=\underline A=\overline B=\underline B=\overline V=\underline V.$$
\end{theorem}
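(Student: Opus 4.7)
The plan is to reduce the four games to the Dynkin game \eqref{e2}--\eqref{e4} and squeeze. First, under URCE of $U$, I would check that the admissible families $\{V^1(\tau)\}$ and $\{V^2(\rho)\}$ from \eqref{e3}--\eqref{e13} are right-continuous along stopping times in expectation, because the uniformity in URCE lets the limit commute with the essinf/esssup defining $V^1$ and $V^2$. With this regularity in hand, the general framework of \cite{Ko3} produces a common Dynkin value $\overline V=\underline V=:V$. A direct inspection of \eqref{e1} also shows $\bT^i\subseteq\bT^{ii}$, giving the easy chain
$$\underline A\le\underline B\le\overline B\le\overline A,$$
so it is enough to prove $\overline A\le V$ and $\underline A\ge V$ by constructing Type~I strategies that attain the Dynkin value.

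For $\overline A\le V$, I would fix $\epsilon>0$, pick $\rho_0\in\T$ that is $\epsilon$-optimal for $\overline V$, and use the lattice property of essinf together with a measurable selection to produce $\tilde\rho(\tau)\in\T_\tau$ with $\E_\tau[U(\tilde\rho(\tau),\tau)]\le V^1(\tau)+\epsilon$. For a small $\delta>0$ I set
$$\Rho(\tau):=\bigl(\tilde\rho(\tau)\vee(\tau+\delta)\wedge T\bigr)1_{\{\tau<\rho_0\}}+\rho_0\,1_{\{\tau\ge\rho_0\}}.$$
The shift $+\delta$ forces the left piece to be strictly greater than $\tau$, and a short case analysis on whether $\sigma_1,\sigma_2$ lie below or above $\rho_0$ then yields $\Rho\in\bT^i$. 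Conditioning on $\mathcal F_\tau$ on $\{\tau<\rho_0\}$ and on $\mathcal F_{\rho_0}$ on $\{\tau\ge\rho_0\}$, and using the definitions of $V^1$ and $V^2$, leads to $\sup_\tau\E[U(\Rho(\tau),\tau)]\le V+C\epsilon$. The inequality $\underline A\ge V$ is proved dually, with an esssup-selector $\tilde\tau(\rho)$ and an $\epsilon$-optimal $\tau_0$ for $\underline V$.

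The main obstacle is the verification that $\Rho$ actually belongs to $\bT^i$ and not merely to $\bT^{ii}$. The delicate subcase is the cross configuration $\sigma_1<\rho_0\le\sigma_2$: there the second alternative of \eqref{e1} requires the strict inequality $\tilde\rho(\sigma_1)\wedge\rho_0>\sigma_1$, which in turn forces $\tilde\rho(\tau)>\tau$ strictly, something the raw essinf selector need not satisfy. The $\delta$-shift in the construction restores strictness, and URCE of $U$ is exactly what bounds the expected loss introduced by the shift: on $\{\tilde\rho(\tau)<\tau+\delta\}$, both $\E[U(\tau+\delta,\tau)]$ and $\E[U(\tilde\rho(\tau),\tau)]$ converge to $\E[U(\tau,\tau)]$ as $\delta\to 0$, so the perturbation cost vanishes in the limit and can be absorbed into $\epsilon$. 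This is precisely the mechanism by which, as announced in the introduction, URCE erases the essential difference between Type~I and Type~II non-anticipativity in continuous time.
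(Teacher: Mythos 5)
Your reduction to the Dynkin game and your construction of an $\eps$-optimal Type I strategy for $\overline A$ follow the paper's own route: the paper likewise derives $\overline V=\underline V$ from the RCE of $V^1,V^2$ (\cite[Theorem 2.2]{Ko1} together with \cite[Theorem 3.6]{Ko3}), and proves $\overline A\le\overline V$ and $\overline A\ge\underline V$ by pasting a Dynkin $\eps$-optimizer $\rho_\eps$ with a strictly-later $\eps$-optimizer of $V^1(\tau)$ on $\{\tau<\rho_\eps\}$ (Lemmas~\ref{l3} and~\ref{l4}). The only real difference in that part is how strictness is obtained: the paper invokes \cite[Proposition 1.15]{Ko2} to choose the selector directly in $\T_{\tau+}$ (Lemma~\ref{l2}), whereas you shift by $\delta$ and absorb the cost via right continuity. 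Your variant can be made to work, but note that $\delta$ must be allowed to depend on $\tau$ (URCE gives uniformity in the \emph{other} argument, not along a $\tau$-dependent approximating sequence), and the dual bound for $\underline A$ also needs the tie-breaking swap of the paper's Proposition on replacing $\{\tau\le\rho\}$ by $\{\tau<\rho\}$, which you do not address.

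The genuine gap is the link $\underline B\le\overline B$ in your ``easy chain.'' There is no weak duality between these two quantities: in $\underline B$ the strategy acts on the second argument of $U$ and in $\overline B$ on the first, so comparing them would require a consistent pair $\rho=\Rho(\tau)$, $\tau=\Tau(\rho)$, and such a fixed point need not exist for two Type~II strategies. Indeed, Remark~2.1 of the paper exhibits a (non-RCE) payoff with $\overline B=0<1=\underline B$, so the inequality is false in general; under URCE it does hold, but only as a consequence of the full theorem, which makes your chain circular. The same caveat applies in principle to $\underline A\le\overline A$, although you do not actually need that link since you prove $\overline A\le V\le\underline A$ directly. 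The repair is what the paper does: prove the missing bounds for $\overline B$ and $\underline B$ directly. The response construction of Lemma~\ref{l4} is deliberately written to use only the Type~II half of \eqref{e1}, hence it yields $\overline B\ge\underline V$ verbatim for every $\Rho\in\bT^{ii}$; combined with $\overline B\le\overline A\le\overline V$ this closes $\overline B$, and the symmetric argument closes $\underline B$.
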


\begin{remark}
Without the right continuity assumption of $U$, it may fail that $\overline A=\underline A$ or $\overline B=\underline B$, even for some natural choices of $U$. For example. let $U(s,t)=|f(s)-f(t)|$, where
\begin{displaymath}
   f(t) = \left\{
     \begin{array}{lr}
       0\ , & 0\leq t\leq T/2,\\
       1\ , & T/2< t\leq T.
     \end{array}
   \right.
\end{displaymath}
Then the problems related to $\overline A,\underline A,\overline B,\underline B$ become deterministic.

Let us first show that $\overline A=1$. Take $\Rho\in\bT^i$. If $\Rho(T)\leq T/2$, then by taking $\tau=T$ we have that $\overline A=1$. Otherwise $\Rho(T)>T/2$, and we take $\tau=T/2$; Then by the non-anticipativity condition \eqref{e1}, we have that $\Rho(T/2)\wedge\Rho(T)>(T/2)\wedge T=T/2$, which implies $\overline A=1$. Next, consider $\underline A$. For any $\Tau\in\bT^i$, by \prref{p1} $\Tau(\Tau(T))=\Tau(T)$. Then letting $\rho=\Tau(T)$ we have that $\underline A=0$. Therefore, $\overline A\neq \underline A$.

Now by taking $\Rho(\tau)=\tau$ we have that $\overline B=0$.  Let us consider $\underline B$. Let $\Tau\in\bT^{ii}$ defined as
\begin{displaymath}
   \Tau(\rho) = \left\{
     \begin{array}{lr}
       T\ , & 0\leq \rho\leq T/2,\\
       T/2\ , & T/2<\rho\leq T.
     \end{array}
   \right.
\end{displaymath}
Then for any $\rho\in\T$, $U(\rho,\Tau(\rho))=1$ and thus $\underline B=1$. Therefore, $\overline B\neq \underline B$.
\end{remark}

\subsection{A sufficient condition for  $U$ to be URCE}
Let $W:[0,T]\times[0,T]\times\mathbb{R}\times\mathbb{R}\mapsto\mathbb{R}$ be $\mathcal{B}([0,T])\otimes\mathcal{B}([0,T])\otimes\mathcal{B}(\mathbb{R})\otimes\mathcal{B}(\mathbb{R})$-measurable. Assume that $W$ satisfies the Lipschitz condition, i.e., there exists some $L\in(0,\infty)$ such that
$$|W(s_1,t_1,x_1,y_1)-W(s_2,t_2,x_2,y_2)|\leq L(|s_1-s_2|+|t_1-t_2|+|x_1-x_2|+|y_1-y_2|).$$
Let $f=(f_t)_{0\leq t\leq T}$ and $g=(g_t)_{0\leq t\leq T}$ be two bounded and right continuous $\mathbb{F}$-progressively measurable processes.
\begin{proposition}
The family $\{U(\rho,\tau):=W(\rho,\tau,f_\rho,g_\tau),\ \rho,\tau\in\T\}$ is biadmissible and URCE. 
\end{proposition}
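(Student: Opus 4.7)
The plan is to verify the three pieces of the definitions separately and then rely on the Lipschitz structure of $W$ to make the URCE estimates uniform.

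First I would check biadmissibility. Since $\rho,\tau$ are $\mathbb{F}$-stopping times and $f,g$ are $\mathbb{F}$-progressively measurable, $f_\rho$ is $\mathcal{F}_\rho$-measurable and $g_\tau$ is $\mathcal{F}_\tau$-measurable, hence both (and the random variables $\rho,\tau$ themselves) are $\mathcal{F}_{\rho\vee\tau}$-measurable. The Lipschitz assumption implies $W$ is jointly continuous, so $W(\rho,\tau,f_\rho,g_\tau)$ is $\mathcal{F}_{\rho\vee\tau}$-measurable. Boundedness follows from boundedness of $f,g$ together with the Lipschitz bound $|W(s,t,x,y)|\leq |W(0,0,0,0)|+L(T+T+\|f\|_\infty+\|g\|_\infty)$. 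The consistency property is immediate: on $\{\rho_1=\rho_2\}\cap\{\tau_1=\tau_2\}$ we have $f_{\rho_1}=f_{\rho_2}$ and $g_{\tau_1}=g_{\tau_2}$, so $W(\rho_1,\tau_1,f_{\rho_1},g_{\tau_1})=W(\rho_2,\tau_2,f_{\rho_2},g_{\tau_2})$.

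Next I would establish URCE. The sup bound $\sup_{\rho,\tau}\E[U(\rho,\tau)]<\infty$ follows from boundedness. For the two limit conditions, the key observation is that the Lipschitz property gives a pointwise bound which is \emph{independent of the other variable}. Concretely, for any $\rho,\tau\in\T$ and any $(\tau_n)\subset\T$ with $\tau_n\searrow\tau$,
\begin{equation*}
|U(\rho,\tau)-U(\rho,\tau_n)|=|W(\rho,\tau,f_\rho,g_\tau)-W(\rho,\tau_n,f_\rho,g_{\tau_n})|\leq L\bigl(|\tau-\tau_n|+|g_\tau-g_{\tau_n}|\bigr).
\end{equation*}
Since the right-hand side does not involve $\rho$, we get
\begin{equation*}
\sup_{\rho\in\T}\bigl|\E[U(\rho,\tau)-U(\rho,\tau_n)]\bigr|\leq L\,\E\bigl[|\tau-\tau_n|+|g_\tau-g_{\tau_n}|\bigr].
\end{equation*}
Now $|\tau-\tau_n|\to 0$ pointwise and is dominated by $T$, so $\E|\tau-\tau_n|\to 0$ by dominated convergence. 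Right continuity of $g$ together with $\tau_n\searrow\tau$ gives $g_{\tau_n}\to g_\tau$ pointwise, and boundedness of $g$ lets dominated convergence again yield $\E|g_\tau-g_{\tau_n}|\to 0$. This gives the first URCE limit. The second limit is identical with the roles of $\rho,\tau$ and $f,g$ exchanged, using right continuity of $f$.

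There is no real obstacle: the argument is driven entirely by the Lipschitz constant bounding away the dependence on the frozen variable, which is what produces uniformity in the sup. The only point that requires any care is that $g_{\tau_n}\to g_\tau$ uses right continuity of the path combined with $\tau_n\searrow\tau$ (and the progressive measurability ensures $g_{\tau_n}$ is a well-defined random variable at each $n$), but this is standard.
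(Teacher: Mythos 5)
Your proof is correct and follows essentially the same route as the paper: exploit the Lipschitz bound to control $|U(\rho,\tau)-U(\rho,\tau_n)|$ by a quantity independent of the frozen variable, then conclude by right continuity of the paths and dominated convergence. You simply spell out the biadmissibility check and the dominated-convergence steps that the paper leaves implicit (and you correctly use $g_\tau-g_{\tau_n}$ where the paper's displayed estimate has a harmless typo with $f$ in place of $g$).
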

\begin{proof}
The biadmissibility is easy to check.  Let us check $U$ satisfies URCE: For any $\rho,\tau\in\T$ and any $(\tau_n)_n\subset\T$ with $\tau_n\searrow\tau$, we have that 
$$\lim_{n\rightarrow\infty}\sup_{\rho\in\T}\left|\E\left[U(\rho,\tau)-U(\rho,\tau_n)\right]\right|\leq L\lim_{n\rightarrow\infty}\E\left[|\tau-\tau_n|+|f_\tau-f_{\tau_n}|\right]=0.$$
\end{proof}

\subsection{An application}
 Let
$U(t,s)=\mathcal{U}(f_t-g_s),$
where $\mathcal{U}: \mathbb{R}\mapsto\mathbb{R}$ is a utility function, and $f$ and $g$ are two right continuous progressively measurable process. Consider
$$\overline{\mathbb{B}}:=\sup_{\Rho\in\bT^{ii}}\inf_{\tau\in\T}\E[U(\Rho(\tau),\tau)].$$
This problem can be interpreted as the one in which an investor longs an American option $f$ and shorts an American option $g$, and the goal is to choose an optimal stopping strategy to maximize the utility according to the stopping behavior of the holder of $g$. Here we assume that the maturities of $f$ and $g$ are the same (i.e., $T$). This is without loss of generality. Indeed for instance, if the maturity of $f$ is $\hat t<T$, then we can define $f(t)=f(\hat t)$ for $t\in(\hat t,T]$.

\section{Proof of \thref{t1}}
We will only prove that $\overline A=\overline V=\underline V$, and the proof we provide in this section also works for $\underline A,\overline B$ and $\underline B$. Throughout this section, we assume that the biadmissible family $\{U(\rho,\tau),\ \rho,\tau\in\T\}$ is URCE. 
\begin{lemma}\label{l1}
$\overline V=\underline V.$
\end{lemma}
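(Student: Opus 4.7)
The plan is to recognize $\overline V$ and $\underline V$ as the upper and lower values of a zero-sum Dynkin game with lower payoff family $V^1$ and upper payoff family $V^2$, and to reduce \leref{l1} to the Dynkin-game value theorem proved in \cite{Ko3}. That general theorem requires three ingredients: (a) $V^1$ and $V^2$ are admissible families, (b) they are right continuous along stopping times in expectation (RCE), and (c) $V^1(\sigma)\leq V^2(\sigma)$ for all $\sigma\in\T$. Part (c) is already observed in the excerpt, so the real work is establishing (a) and (b).

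For (a), I would check that for each $\tau\in\T$ the random variable $V^1(\tau)=\essinf_{\rho\in\T_\tau}\E_\tau[U(\rho,\tau)]$ is $\mathcal{F}_\tau$-measurable and bounded (the bound on $U$ built into URCE transfers to an essinf of conditional expectations), and that $V^1(\sigma_1)=V^1(\sigma_2)$ on $\{\sigma_1=\sigma_2\}$: on this event $\T_{\sigma_1}=\T_{\sigma_2}$ and $\E_{\sigma_1}[\cdot]=\E_{\sigma_2}[\cdot]$, and biadmissibility of $U$ gives $U(\rho,\sigma_1)=U(\rho,\sigma_2)$ there. The same arguments, with essinf replaced by esssup, yield admissibility of $V^2$.

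Step (b) is the crux. Fix $\tau_n\searrow\tau$ in $\T$. For the upper bound $\limsup_n\E[V^1(\tau_n)]\leq\E[V^1(\tau)]$, given $\rho\in\T_\tau$ set $\rho_n:=\rho\vee\tau_n\in\T_{\tau_n}$ so that $V^1(\tau_n)\leq\E_{\tau_n}[U(\rho_n,\tau_n)]$; taking expectation and using URCE (once in the $\tau$-slot with $\tau_n\searrow\tau$ and once in the $\rho$-slot with $\rho_n\searrow\rho$) gives $\limsup_n\E[V^1(\tau_n)]\leq\E[U(\rho,\tau)]$ for every $\rho\in\T_\tau$. The family $\{\E_\tau[U(\rho,\tau)]:\rho\in\T_\tau\}$ is directed downward (for $\rho_1,\rho_2\in\T_\tau$ one inserts the pasting $\rho_1 1_A+\rho_2 1_{A^c}$ with $A=\{\E_\tau[U(\rho_1,\tau)]\leq \E_\tau[U(\rho_2,\tau)]\}\in\mathcal{F}_\tau$), so there is a sequence $(\rho^k)\subset\T_\tau$ with $\E_\tau[U(\rho^k,\tau)]\searrow V^1(\tau)$, and monotone convergence concludes. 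For the reverse $\liminf$ bound, I would pick $\eps$-optimal $\rho^n\in\T_{\tau_n}$ for $V^1(\tau_n)$, note $\rho^n\in\T_\tau$, and use the \emph{uniformity in $\rho$} built into URCE to replace $\E_{\tau_n}[U(\rho^n,\tau_n)]$ by $\E_\tau[U(\rho^n,\tau)]\geq V^1(\tau)$ up to an error that tends to $0$. The treatment of $V^2$ is symmetric, with esssup replacing essinf and the directedness being upward.

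Once (a), (b), (c) are in hand, the Dynkin-game value theorem of \cite{Ko3} delivers $\overline V=\underline V$. The main obstacle is step (b): essinf and esssup do not commute with limits of expectations, so the proof must exploit the full uniformity in the URCE hypothesis, and moreover the two halves of the convergence are not symmetric because the essinf/esssup are taken over the strictly shrinking families $\T_{\tau_n}\subsetneq\T_\tau$, which is precisely why one half of the argument requires passing through $\rho\vee\tau_n$ and the other requires extracting near-optimizers inside $\T_{\tau_n}$ and lifting them to $\T_\tau$.
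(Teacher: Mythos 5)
Your proposal follows exactly the same route as the paper: verify that $V^1$ and $V^2$ are admissible and RCE, and then invoke the Dynkin-game value theorem of \cite{Ko3}. The only difference is that the paper outsources both verifications to \cite{Ko1} (the argument below (2.2) there for admissibility, and Theorem 2.2 there for RCE), whereas you prove them directly; your direct RCE argument via $\rho\vee\tau_n$ for one inequality and near-optimizers in $\T_{\tau_n}\subset\T_\tau$ for the other is correct.
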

\begin{proof}
The argument below (2.2) in \cite{Ko1} shows that $\{V^1(\tau),\ \tau\in\T\}$ and $\{V^2(\rho),\ \rho\in\T\}$ are admissible. By \cite[Theorem 2.2]{Ko1},  $V^1$ and $V^2$ are RCE because U is assumed to be URCE. Then by \cite[Theorem 3.6]{Ko3} we have that $\overline V=\underline V$.
\end{proof}

\begin{proposition}
The values of $\overline V$ and $\underline V$ do not change if we replace $\{\tau\leq\rho\}$ and $\{\tau>\rho\}$ in \eqref{e2} and \eqref{e4} with $\{\tau<\rho\}$ and $\{\tau\geq\rho\}$ respectively.
\end{proposition}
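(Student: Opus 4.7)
The plan is to show the two integrands coincide up to a harmless modification of one stopping time on the event $\{\tau=\rho\}$. Denote the original integrand by $A(\rho,\tau):=V^1(\tau)1_{\{\tau\leq\rho\}}+V^2(\rho)1_{\{\tau>\rho\}}$ and the modified one by $B(\rho,\tau):=V^1(\tau)1_{\{\tau<\rho\}}+V^2(\rho)1_{\{\tau\geq\rho\}}$, and write $\overline V'$, $\underline V'$ for the values of the modified game. The easy pointwise direction is immediate: $A$ and $B$ agree off $\{\tau=\rho\}$, and on $\{\tau=\rho\}$ they take the values $V^1(\rho)$ and $V^2(\rho)$ respectively, with $V^1(\rho)\leq V^2(\rho)$. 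Hence $A\leq B$ pointwise, so $\overline V\leq\overline V'$ and $\underline V\leq\underline V'$.

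For the reverse direction, given any $\rho,\tau\in\T$ I would introduce
$$\tilde\tau:=\tau\,1_{\{\tau\neq\rho\}}+T\,1_{\{\tau=\rho\}},\qquad\tilde\rho:=\rho\,1_{\{\rho\neq\tau\}}+T\,1_{\{\rho=\tau\}}.$$
Both lie in $\T$; the only non-obvious measurability point is $\{\tau=\rho\}\cap\{\tau\leq s\}\in\mathcal{F}_s$ for $s\in[0,T)$, which follows from $\{\tau=\rho\}\cap\{\tau\leq s\}=\{\tau\leq s\}\cap\{\rho\leq s\}\cap\{\tau=\rho\}$ together with the $\mathcal{F}_s$-measurability of both $\tau$ and $\rho$ on $\{\tau\leq s,\rho\leq s\}$. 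A case-by-case check on the four events $\{\tau<\rho\}$, $\{\tau>\rho\}$, $\{\tau=\rho<T\}$, $\{\tau=\rho=T\}$ then yields the a.s.\ identities
$$A(\rho,\tilde\tau)=B(\rho,\tau)\qquad\text{and}\qquad B(\tilde\rho,\tau)=A(\rho,\tau),$$
where the slightly delicate case $\{\tau=\rho=T\}$ is handled via $V^1(T)=V^2(T)=U(T,T)$, which is immediate from $\T_T=\{T\}$.

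From the first identity, $\E[B(\rho,\tau)]=\E[A(\rho,\tilde\tau)]\leq\sup_{\tau'\in\T}\E[A(\rho,\tau')]$ for each $\tau$, giving $\overline V'\leq\overline V$; from the second, $\inf_{\rho'\in\T}\E[B(\rho',\tau)]\leq\E[B(\tilde\rho,\tau)]=\E[A(\rho,\tau)]$ for each $\rho$, giving $\underline V'\leq\underline V$. The main obstacle is really just verifying that $\tilde\tau$ and $\tilde\rho$ are stopping times; the subsequent case check and conclusion are bookkeeping. Notably, no URCE or right-continuity hypothesis enters the argument.
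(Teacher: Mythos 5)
Your proof is correct, and it takes a genuinely different and more elementary route than the paper's. The paper deduces the proposition from the supermartingale-family construction of \cite{Ko3}: writing $J,J'$ for the nonnegative supermartingale families associated with the Dynkin game $(\overline V,\underline V)$, it observes that the game with the reversed indicators is, up to a sign, the same game with $J$ and $J'$ interchanged, so both games have the common value $J(0)-J'(0)$; this hinges on the RCE property of $V^1,V^2$ and hence on the standing URCE assumption on $U$. You instead compare the two integrands pathwise ($A\leq B$, via $V^1(\sigma)\leq U(\sigma,\sigma)\leq V^2(\sigma)$) and obtain the reverse inequalities by pushing one of the two stopping times to $T$ on the diagonal event $\{\tau=\rho\}$; the only ingredients are the admissibility (consistency) of the families $V^1,V^2$ --- which you use implicitly when writing $V^1(\tilde\tau)=V^1(\tau)$ on $\{\tilde\tau=\tau\}$ and $A(\rho,\tau)=V^1(\rho)$ on $\{\tau=\rho\}$, and which the paper establishes at the start of the proof of Lemma 3.1 --- together with $V^1(T)=V^2(T)=U(T,T)$ and the stopping-time property of $\tilde\tau,\tilde\rho$, which you verify correctly. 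Your approach buys the stronger statement that the proposition holds with no right-continuity hypothesis whatsoever; the paper's approach buys, as a by-product, the fact that the modified game itself has a value, $\overline\cV=\underline\cV$, which in your setup follows only after combining with Lemma 3.1.
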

\begin{proof}
Define
$$\overline\cV:=\inf_{\rho\in\T}\sup_{\tau\in\T}\E\left[V^1(\tau) 1_{\{\tau
<\rho\}}+V^2(\rho) 1_{\{\tau\geq\rho\}}\right],$$
$$\underline\cV:=\sup_{\tau\in\T}\inf_{\rho\in\T}\E\left[V^1(\tau) 1_{\{\tau<\rho\}}+V^2(\rho) 1_{\{\tau\geq\rho\}}\right].$$
Since $U$ is URCE, the families $V^1$ and $V^2$ are RCE by \cite[Theorem 2.2]{Ko1}, and hence \cite[Theorem 3.6]{Ko3} can be applied to the Dynkin game with value functions $\overline V$ and $\underline V$, as well as the Dynkin game with value functions $-\overline\cV$ and $-\underline\cV$. Now by construction (see \cite{Ko3}), the families $\mathcal{J}$ and $\mathcal{J}'$ associated with the Dynkin game $(-\overline\cV,-\underline\cV)$ satisfy $\mathcal{J}=J'$ and $\mathcal{J}'=J$,
where $J$ and $J'$ are two nonnegative supermartingale families satisfying
$$J(\sigma)=\esssup_{\tau\in\T_\sigma}\E_\sigma[J'(\tau)+V^1(\tau)],$$
$$J'(\sigma)=\esssup_{\rho\in\T_\sigma}\E_\sigma[J(\rho)-V^2(\rho)];$$
It follows that
$$\overline\cV=\underline\cV=J(0)-J'(0)=\overline V=\underline V.$$
\end{proof}
\begin{lemma}\label{l2}
For any $\eps>0$ and $\tau\in\T$, there exists $\rho_\tau\in\T_{\tau+}$, such that
$$\E|\E_\tau[U(\rho_\tau,\tau)]-V^1(\tau)|<\eps.$$
A similar result holds for $V^2$.
\end{lemma}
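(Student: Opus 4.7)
The plan is to exploit the essinf definition of $V^1(\tau)$ together with the URCE property to perturb approximating stopping times so that they become strictly greater than $\tau$. The starting observation is the classical fact that the family $\{\E_\tau[U(\rho,\tau)]:\rho\in\T_\tau\}$ is stable under pairwise minimum: for $\rho_1,\rho_2\in\T_\tau$ the event $A:=\{\E_\tau[U(\rho_1,\tau)]\leq\E_\tau[U(\rho_2,\tau)]\}$ lies in $\mathcal{F}_\tau$, and $\rho:=\rho_1 1_A+\rho_2 1_{A^c}\in\T_\tau$ realises the pointwise minimum of the two conditional expectations. By the standard exhaustion argument one therefore obtains a sequence $(\rho_n)_n\subset\T_\tau$ with $\E_\tau[U(\rho_n,\tau)]\searrow V^1(\tau)$ a.s.; since $U$ is bounded, dominated convergence yields $\E[U(\rho_n,\tau)]\to\E[V^1(\tau)]$, so one may fix $n_0$ with $\E[U(\rho_{n_0},\tau)]-\E[V^1(\tau)]<\eps/2$.

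The second step is to replace $\rho_{n_0}$ by an element of $\T_{\tau+}$. For each $k\geq 1$ set
$$\rho^k:=\rho_{n_0}\vee\bigl((\tau+1/k)\wedge T\bigr).$$
This is a stopping time dominating $\rho_{n_0}\geq\tau$; on $\{\tau<T\}$ the second factor strictly exceeds $\tau$ for every $k$, while on $\{\tau=T\}$ one has $\rho^k=T$. Hence $\rho^k\in\T_{\tau+}$, and $\rho^k\searrow\rho_{n_0}$ as $k\to\infty$. Invoking URCE (the clause that controls perturbations of the first argument with the second one held fixed) gives $\E[U(\rho^k,\tau)]\to\E[U(\rho_{n_0},\tau)]$, so one can pick $k_0$ with $|\E[U(\rho^{k_0},\tau)]-\E[U(\rho_{n_0},\tau)]|<\eps/2$.

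Finally, because $\rho^{k_0}\in\T_\tau$, one has $\E_\tau[U(\rho^{k_0},\tau)]\geq V^1(\tau)$ a.s., so the nonnegative quantity $|\E_\tau[U(\rho^{k_0},\tau)]-V^1(\tau)|$ integrates to $\E[U(\rho^{k_0},\tau)]-\E[V^1(\tau)]$, which by the triangle inequality is at most $\eps/2+\eps/2=\eps$. Taking $\rho_\tau:=\rho^{k_0}$ finishes the argument. The symmetric statement for $V^2$ follows by the same route with the roles of $\rho$ and $\tau$ exchanged, essinf replaced by esssup, and downward directedness by upward directedness.

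The step I expect to be the main (though not particularly deep) obstacle is precisely this passage from $\T_\tau$, where the essinf naturally lives, to $\T_{\tau+}$; URCE is exactly what makes this shift cost arbitrarily little in expectation, and without it the lemma could fail, as the counterexamples in Remark~2.1 already suggest in the closely related setting of $\bT^{i}$ versus $\bT^{ii}$.
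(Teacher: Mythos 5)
Your argument is correct, but it reaches the conclusion by a genuinely different route from the paper's. The paper first proves the identity $V^1(\tau)=\essinf_{\rho\in\T_{\tau+}}\E_\tau[U(\rho,\tau)]$ by introducing the auxiliary value family $\mathbb{U}^\tau(S):=\essinf_{\rho\in\T_S}\E_S[U(\rho,\tau)]$, asserting it is RCE, and invoking \cite[Proposition 1.15]{Ko2}; it then runs the downward-directedness/exhaustion argument entirely inside $\T_{\tau+}$, so that the $L^1$-convergence of $\E_\tau[U(\rho_n,\tau)]$ to $V^1(\tau)$ directly produces the required $\rho_\tau$. You instead exhaust inside $\T_\tau$, where the essential infimum defining $V^1(\tau)$ naturally lives, and then push the near-optimal $\rho_{n_0}$ into $\T_{\tau+}$ by hand via $\rho^k=\rho_{n_0}\vee\bigl((\tau+1/k)\wedge T\bigr)$, paying for the shift with the URCE hypothesis (only the clause controlling perturbations of the first argument is needed). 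Both proofs are sound and both use the right-continuity assumption in an essential way; yours is more self-contained, replacing the citation of \cite[Proposition 1.15]{Ko2} and the unproved claim that $\{\mathbb{U}^\tau(S),\ S\in\T\}$ is RCE by an explicit two-line perturbation, while the paper's version yields the identity $V^1(\tau)=\essinf_{\rho\in\T_{\tau+}}\E_\tau[U(\rho,\tau)]$ as a by-product. Your closing step is also fine as stated: since $\T_{\tau+}\subset\T_\tau$ one has $\E_\tau[U(\rho^{k_0},\tau)]\geq V^1(\tau)$ a.s., so the $L^1$-distance collapses to a difference of expectations, and the two strict $\eps/2$ bounds give the required strict inequality.
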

\begin{proof}
First let us show that
\begin{equation}\label{e333}
V^1(\tau)=\essinf_{\rho\in\T_{\tau+}}\E_\tau[U(\rho,\tau)].
\end{equation}
Fix $\tau\in\mathcal{T}$. Define the value function for each stopping time $S$ by 
$$\mathbb{U}^\tau(S) := \essinf_{\rho\in\mathcal{T}_S}\E_S[U(\rho,\tau)].$$
The family $\{\mathbb{U}^\tau(S),\ S\in\mathcal{T}\}$ is clearly RCE. By \cite[Proposition 1.15]{Ko2}, we have that
$$\mathbb{U}^\tau(S) = \essinf_{\rho\in\mathcal{T}_{S+}}\E_S[U(\rho,\tau)].$$
Taking $S=\tau$ in the above, and since $V^1(\tau)=\mathbb{U}^\tau(\tau)$, we obtain \eqref{e333}.

Next, fix $\tau\in\T$. Since the family $\{\E_\tau[U(\rho,\tau)]: \rho\in\T_{\tau+}\}$ is closed under pairwise minimization, by, e.g., \cite[Theorem A.3]{KS2}, there exists $(\rho_n)\in\T_{\tau+}$ such that
$$\lim_{n\rightarrow\infty}\E_\tau[U(\rho_n,\tau)]=\essinf_{\rho\in\T_{\tau+}}\E_\tau[U(\rho,\tau)]=V^1(\tau).$$
Since $U$ and $V^1(\tau)$ are bounded, we have that
$$\lim_{n\rightarrow\infty}\E|\E_\tau[U(\rho_n,\tau)]-V^1(\tau)|=0,$$
which implies the result.
\end{proof}

\begin{lemma}\label{l3}
$\overline A\leq\overline V.$
\end{lemma}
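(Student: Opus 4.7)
The plan is to fix $\eps>0$, take an $\eps/2$-optimizer $\rho^*\in\T$ for the alternate Dynkin game $\overline\cV$ (which by the preceding Proposition coincides with $\overline V$), and paste $\rho^*$ together with near-optimizers of $V^1(\tau)$ supplied by \leref{l2} to build a Type~I strategy. For each $\tau\in\T$, pick $\rho_\tau\in\T_{\tau+}$ with $\E|\E_\tau[U(\rho_\tau,\tau)]-V^1(\tau)|<\eps/2$, and define
$$\Rho(\tau):=\rho^*\,1_{\{\rho^*\leq\tau\}}+\rho_\tau\,1_{\{\tau<\rho^*\}},\quad\tau\in\T.$$
Since $\{\rho^*\leq\tau\}$ belongs to $\mathcal{F}_{\rho^*}\cap\mathcal{F}_\tau\subset\mathcal{F}_{\rho^*}\cap\mathcal{F}_{\rho_\tau}$, the standard pasting of stopping times yields $\Rho(\tau)\in\T$.

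Next I would verify that $\Rho\in\bT^i$ by a case analysis on a pair $\sigma_1,\sigma_2\in\T$. On $\{\rho^*\leq\sigma_1\}\cap\{\rho^*\leq\sigma_2\}$ one has $\Rho(\sigma_1)=\Rho(\sigma_2)=\rho^*\leq\sigma_1\wedge\sigma_2$; on $\{\sigma_1<\rho^*\}\cap\{\sigma_2<\rho^*\}$ each $\rho_{\sigma_j}>\sigma_j$ gives $\Rho(\sigma_1)\wedge\Rho(\sigma_2)>\sigma_1\wedge\sigma_2$; on $\{\rho^*\leq\sigma_1\}\cap\{\sigma_2<\rho^*\}$ one has $\sigma_1\wedge\sigma_2=\sigma_2$ and $\Rho(\sigma_1)\wedge\Rho(\sigma_2)=\rho^*\wedge\rho_{\sigma_2}>\sigma_2$ (both factors exceed $\sigma_2$ strictly). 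The design is dictated by these cases: the strict inequality $\rho_\tau>\tau$ coming from $\rho_\tau\in\T_{\tau+}$ and the split $\{\tau<\rho^*\}$ versus $\{\rho^*\leq\tau\}$ are both essential---replacing the split by $\{\tau\leq\rho^*\}$ vs.\ $\{\tau>\rho^*\}$ would break \eqref{e1} on the event $\{\sigma_2=\rho^*<\sigma_1\}$.

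To estimate the payoff I would split $\E[U(\Rho(\tau),\tau)]$ according to the two branches of $\Rho$. Since $1_{\{\tau<\rho^*\}}$ is $\mathcal{F}_\tau$-measurable, conditioning on $\mathcal{F}_\tau$ and using the choice of $\rho_\tau$ gives
$$\E[U(\rho_\tau,\tau)\,1_{\{\tau<\rho^*\}}]\leq\E[V^1(\tau)\,1_{\{\tau<\rho^*\}}]+\eps/2.$$
On $\{\rho^*\leq\tau\}\in\mathcal{F}_{\rho^*}$ I would replace $\tau$ by $\tilde\tau:=\tau\vee\rho^*\in\T_{\rho^*}$ (they agree on this event, so biadmissibility of $U$ applies), condition on $\mathcal{F}_{\rho^*}$, and invoke \eqref{e13}:
$$\E[U(\rho^*,\tau)\,1_{\{\rho^*\leq\tau\}}]=\E\bigl[1_{\{\rho^*\leq\tau\}}\,\E_{\rho^*}[U(\rho^*,\tilde\tau)]\bigr]\leq\E[V^2(\rho^*)\,1_{\{\rho^*\leq\tau\}}].$$
Summing the two bounds, applying the $\eps/2$-optimality of $\rho^*$ for $\overline\cV=\overline V$ (in the form $\E[V^1(\tau)1_{\{\tau<\rho^*\}}+V^2(\rho^*)1_{\{\tau\geq\rho^*\}}]$), and then taking the supremum over $\tau\in\T$ gives $\sup_\tau\E[U(\Rho(\tau),\tau)]\leq\overline V+\eps$, hence $\overline A\leq\overline V+\eps$; letting $\eps\downarrow 0$ finishes.

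The main obstacle is not the estimation but the alignment of the construction with the correct form of the Dynkin game: one must use the version with strict inequality $\{\tau<\rho\}$ (which is why the preceding Proposition is invoked), because the natural Type~I pasting forces $\Rho(\tau)=\rho^*$ on $\{\rho^*\leq\tau\}$ and $\Rho(\tau)>\tau$ on $\{\tau<\rho^*\}$; any other splitting breaks either non-anticipativity on $\{\tau=\rho^*\}$ or the bookkeeping tying $U(\rho^*,\tau)$ to $V^2(\rho^*)$.
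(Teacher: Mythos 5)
Your proposal is correct and follows essentially the same route as the paper: the same pasted strategy $\Rho(\tau)=\rho^*1_{\{\rho^*\leq\tau\}}+\rho_\tau 1_{\{\tau<\rho^*\}}$ built from an $\eps$-optimizer of the strict-inequality Dynkin game and the $\T_{\tau+}$-valued near-optimizers of $V^1(\tau)$ from Lemma~3.2, the same case analysis for Type~I non-anticipativity, and the same conditioning argument to bound the payoff by $V^1$ and $V^2$. Your write-up is in fact slightly more explicit than the paper's about why the $\{\tau<\rho\}$ versus $\{\tau\geq\rho\}$ form of the game is the one being used and about the substitution $\tilde\tau=\tau\vee\rho^*$ in the $V^2$ bound.
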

\begin{proof}
Take $\eps>0$. Let $\rho_\eps\in\T$ be an $\eps$-optimizer of $\overline V$, i.e.,
$$\sup_{\tau\in\T}\E\left[1_{\{\rho_\eps\leq\tau\}}V^2(\rho_\eps)+1_{\{\rho_\eps
>\tau\}}V^1(\tau)\right]<\overline V+\eps.$$
For any $\tau\in\T$, by \leref{l2} there exists $\rho_\eps^1(\tau)\in\T_{\tau+}$ such that
$$\E|\E_\tau[U(\rho_\eps^1(\tau),\tau)-V^1(\tau)|<\eps.$$
Define $\Rho_\eps$ as
\begin{equation}\label{e5}
\Rho_\eps(\tau):=\rho_\eps 1_{\{\tau\geq\rho_\eps\}}+\rho_\eps^1(\tau) 1_{\{\tau<\rho_\eps\}},\quad\tau\in\T.
\end{equation}

Let us show that $\Rho_\eps$ is in $\bT^i$. First, for any $\tau\in\T$, $\Rho_\eps(\tau)\in\T$ since for any $t\in[0,T]$,
\begin{eqnarray}
\notag \{\Rho_\eps(\tau)\leq t\}&=&\left(\{\tau\geq\rho_\eps\}\cap\{\rho_\eps\leq t\}\right)\cup\left(\{\tau<\rho_\eps\}\cap\{\rho_\eps^1(\tau)\leq t\}\right)\\
\notag &=&\left(\{\tau\geq\rho_\eps\}\cap\{\rho_\eps\leq t\}\right)\cup\left(\{\tau<\rho_\eps\}\cap\{\tau\leq t\}\cap\{\rho_\eps^1(\tau)\leq t\}\right)\in\mathcal{F}_t.
\end{eqnarray}
Then let us show that $\Rho_\eps$ satisfies the non-anticipativity condition of Type I in \eqref{e1}. Take $\tau_1,\tau_2\in\T$. On $\{\Rho_\eps(\tau_1)\leq\tau_1\wedge\tau_2\leq\tau_1\}$, if $\tau_1<\rho_\eps\leq T$, then $\Rho_\eps(\tau_1)=\rho_\eps^1(\tau_1)>\tau_1$, contradiction. Hence $\tau_1\geq \rho_\eps$, and thus $\Rho_\eps(\tau_1)=\rho_\eps\leq\tau_1\wedge\tau_2\leq\tau_2$, which implies $\Rho_\eps(\tau_2)=\rho_\eps=\Rho_\eps(\tau_1)\leq\tau_1\wedge\tau_2$. On $\{\Rho_\eps(\tau_1)>\tau_1\wedge\tau_2\}$, if $\Rho_\eps(\tau_2)\leq\tau_1\wedge\tau_2$ then we can use the previous argument to get that $\Rho_\eps(\tau_1)=\Rho_\eps(\tau_2)
\leq\tau_1\wedge\tau_2$ which is a contradiction, and thus $\Rho_\eps(\tau_2)>\tau_1\wedge\tau_2$.

We have that
\begin{eqnarray}
\notag \overline A&\leq& \sup_{\tau\in\T}\E[U(\Rho_\eps(\tau),\tau)]\\
\notag &=& \sup_{\tau\in\T}\E\left[U(\rho_\eps,\tau) 1_{\{\rho_\eps\leq\tau\}}+U(\rho_\eps^1(\tau),\tau) 1_{\{\rho_\eps>\tau\}}\right]\\
\notag &=& \sup_{\tau\in\T}\E\left[1_{\{\rho_\eps
\leq\tau\}}\E_{\rho_\eps}[U(\rho_\eps,\tau)] +1_{\{\rho_\eps>\tau\}}\E_\tau[U(\rho_\eps^1(\tau),\tau) \right]\\
\notag&\leq&\sup_{\tau\in\T}\E\left[1_{\{\rho_\eps
\leq\tau\}}V^2(\rho_\eps)+1_{\{\rho_\eps
>\tau\}}V^1(\tau)\right]+\eps\\
\notag&
\leq&\overline V+2\eps.
\end{eqnarray}
\end{proof}

\begin{remark}
Once we show \thref{t1}, we can see that $\Rho_\eps\in\bT^i\subset\bT^{ii}$ defined in \eqref{e5} is a $2\eps$-optimizer for $\overline A$ and $\overline B$.
\end{remark}

\begin{lemma}\label{l4}
$\overline A\geq\underline V.$
\end{lemma}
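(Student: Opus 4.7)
The plan is to use \leref{l1} and show the equivalent inequality $\overline A\geq\overline V$. I fix an arbitrary $\Rho\in\bT^i$ and $\eps>0$, and aim to exhibit some $\tau'\in\T$ with $\E[U(\Rho(\tau'),\tau')]\geq\overline V-2\eps$; by arbitrariness of $\Rho$ and $\eps$ this yields $\overline A\geq\overline V=\underline V$.

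The first step is a structural consequence of Type~I non-anticipativity. Setting $\rho^*:=\Rho(T)$ and applying \eqref{e1} to the pair $(\sigma_1,\sigma_2)=(\tau,T)$ (so that $\sigma_1\wedge\sigma_2=\tau$ and $\Rho(\sigma_2)=\rho^*$), one obtains the dichotomy
\[
\Rho(\tau)=\rho^*\ \text{on}\ \{\tau\geq\rho^*\}\qquad\text{and}\qquad\Rho(\tau)>\tau\ \text{on}\ \{\tau<\rho^*\},\qquad\text{a.s.,}
\]
because the alternative $\Rho(\tau)=\rho^*\leq\tau$ fails on $\{\tau<\rho^*\}$ while the alternative $\Rho(\tau)\wedge\rho^*>\tau$ fails on $\{\tau\geq\rho^*\}$. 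This pins down $\Rho$ on $\{\tau\geq\rho^*\}$ and still gives enough control where $\tau<\rho^*$.

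Next I would use $\rho^*$ as the adversary in $\overline V=\inf_\rho\sup_\tau\E[\cdots]$, which gives $\overline V\leq\sup_\tau\E[V^1(\tau)1_{\{\tau\leq\rho^*\}}+V^2(\rho^*)1_{\{\tau>\rho^*\}}]$. Pick $\tau_\eps$ that is $\eps$-optimal for this supremum, and by the $V^2$-analog of \leref{l2} pick $\tau^2\in\T_{\rho^*+}$ with $\E|\E_{\rho^*}[U(\rho^*,\tau^2)]-V^2(\rho^*)|<\eps$. Then set
\[
\tau':=\tau_\eps\,1_{\{\tau_\eps\leq\rho^*\}}+\tau^2\,1_{\{\tau_\eps>\rho^*\}},
\]
a stopping time because $\{\tau_\eps\leq\rho^*\}\in\mathcal{F}_{\tau_\eps}\cap\mathcal{F}_{\rho^*}=\mathcal{F}_{\tau_\eps\wedge\rho^*}\subset\mathcal{F}_{\tau_\eps\wedge\tau^2}$ (using $\tau^2\geq\rho^*$).

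The bound $\E[U(\Rho(\tau'),\tau')]\geq\overline V-2\eps$ then splits according to the concatenation. On $\{\tau_\eps>\rho^*\}\in\mathcal{F}_{\rho^*}$: one has $\tau'=\tau^2>\rho^*$, so the dichotomy gives $\Rho(\tau')=\rho^*$ and $U(\Rho(\tau'),\tau')=U(\rho^*,\tau^2)$; conditioning on $\mathcal{F}_{\rho^*}$ and using the choice of $\tau^2$ gives contribution $\geq\E[V^2(\rho^*)1_{\{\tau_\eps>\rho^*\}}]-\eps$. On $\{\tau_\eps\leq\rho^*\}$: $\tau'=\tau_\eps\leq\rho^*$, and applying the dichotomy now to $\tau'$ itself gives $\Rho(\tau')\geq\tau'$ on this set, so the truncation $\tilde\rho:=\Rho(\tau')\vee\tau'\in\T_{\tau'}$ agrees with $\Rho(\tau')$ here, and $\E_{\tau'}[U(\tilde\rho,\tau')]\geq V^1(\tau')$ yields contribution $\geq\E[V^1(\tau_\eps)1_{\{\tau_\eps\leq\rho^*\}}]$ (using the admissibility identity $V^1(\tau')=V^1(\tau_\eps)$ on $\{\tau'=\tau_\eps\}$). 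Summing and using the $\eps$-optimality of $\tau_\eps$ closes the estimate. The main obstacle is the second case: since $\tau'\neq\tau_\eps$ globally, Type~I must be re-applied to $\tau'$ (rather than $\tau_\eps$) to identify $\Rho(\tau')$; moreover $\Rho(\tau')$ fails to be pointwise $\geq\tau'$ (it equals $\rho^*<\tau'$ on $\{\tau_\eps>\rho^*\}$), so the truncation $\Rho(\tau')\vee\tau'$ is precisely what makes $V^1(\tau')$'s essinf applicable.
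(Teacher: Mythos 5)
Your proof is correct, but it is genuinely different from the paper's. The paper attacks $\underline V$ directly: it fixes a single, $\Rho$-independent $\eps$-optimizer $\tau_\eps$ for the outer supremum in $\underline V$, builds the response $\tau_\Rho=\tau_\eps 1_{\{\tau_\eps\leq\Rho(\tau_\eps)\}}+\tau_\eps^2(\Rho(\tau_\eps))1_{\{\tau_\eps>\Rho(\tau_\eps)\}}$ using $\Rho(\tau_\eps)$ as the splitting time, and deliberately invokes only the weaker Type~II half of \eqref{e1} so that the identical argument also yields $\overline B\geq\underline V$. You instead pass through $\overline V$ (legitimately, via \leref{l1}) and extract the structural dichotomy $\Rho(\tau)=\Rho(T)$ on $\{\tau\geq\Rho(T)\}$, $\Rho(\tau)>\tau$ on $\{\tau<\Rho(T)\}$ — a pointwise strengthening of \prref{p1} — which lets you feed the ordinary stopping time $\rho^*=\Rho(T)$ into the infimum defining $\overline V$ and then choose a $\Rho$-dependent $\eps$-optimal $\tau_\eps$ for the resulting supremum. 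The details all check out: the dichotomy applied to $\tau'$ itself (not $\tau_\eps$), the identity $\{\tau'\leq\rho^*\}=\{\tau_\eps\leq\rho^*\}\in\mathcal{F}_{\rho^*}\cap\mathcal{F}_{\tau'}$, the truncation $\Rho(\tau')\vee\tau'$ making $V^1(\tau')$'s essential infimum applicable, and the use of (bi)admissibility to replace $V^1(\tau')$ by $V^1(\tau_\eps)$ are all exactly the points that need care, and you handle them. What your route buys is a transparent explanation of why a Type~I strategy cannot beat the Dynkin value: where it matters it is determined by the single stopping time $\Rho(T)$. What it costs, at least as stated, is the uniformity of the paper's proof across $\overline A$ and $\overline B$ (though in fact your dichotomy with $\sigma_2=T$ has a Type~II analogue, so the loss is smaller than it first appears).
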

\begin{proof}
Fix $\eps>0$. Let $\tau_\eps\in\T$ be an $\eps$-optimizer for $\underline V$. For any $\rho\in\T$, by \leref{l2} there exists  $\tau_\eps^2(\rho)\in\T_{\rho+}$ such that
$$\E|\E_\tau[U(\rho,\tau_\eps^2(\rho))-V^2(\rho)|<\eps$$
For any $\Rho\in\bT^{i}$, define $\tau_\Rho$ as
$$\tau_\Rho:=\tau_\eps 1_{\{\tau_\eps\leq\Rho(\tau_\eps)\}}+\tau_\eps^2(\Rho(\tau_\eps)) 1_{\{\tau_\eps>\Rho(\tau_\eps)\}}.$$
Using a similar argument as in the proof of \leref{l3}, we can show that $\tau_\Rho$ is a stopping time. 

Since $\bT^i\subset\bT^{ii}$, and also in order to let our proof also fit for $\overline B$, we shall only use the non-anticipativity condition of Type II for $\Rho$ in \eqref{e1}, although $\Rho\in\bT^i$. By \eqref{e1} w.r.t. Type II, it holds a.s. that
$$\text{either}\quad\Rho(\tau_\Rho)=\Rho(\tau_\eps)<\tau_\eps\wedge\tau_\Rho\quad\text{or}\quad\Rho(\tau_\Rho)\wedge\Rho(\tau_\eps)\geq\tau_\eps\wedge\tau_\Rho.$$
Therefore, 
$$\text{on}\quad\{\tau_\eps\leq\Rho(\tau_\eps)\},\quad\text{we have}\quad\Rho(\tau_\Rho)\geq\tau_\eps\wedge\tau_\Rho=\tau_\eps=\tau_\Rho,$$
and
$$\text{on}\quad\{\tau_\eps>\Rho(\tau_\eps)\},\quad\text{we have}\quad\tau_\Rho=\tau^2_\varepsilon(\Rho(\tau_\eps))>\Rho(\tau_\eps)\implies\Rho(\tau_\eps)<\tau_\Rho\wedge\tau_\eps\implies\Rho(\tau_\eps)=\Rho(\tau_\Rho).$$

Now we have that
\begin{eqnarray}
\notag\sup_{\tau\in\T}\E[U(\Rho(\tau),\tau))]&\geq&\E[U(\Rho(\tau_\Rho),\tau_\Rho)]\\
\notag&=&\E\left[U(\Rho(\tau_\Rho),\tau_\Rho) 1_{\{\tau_\eps\leq\Rho(\tau_\eps)\}}+U(\Rho(\tau_\Rho),\tau_\Rho) 1_{\{\tau_\eps>\Rho(\tau_\eps)\}}\right]\\
\notag&=&\E\left[U(\Rho(\tau_\Rho),\tau_\eps) 1_{\{\tau_\eps
\leq\Rho(\tau_\eps)\}}+U(\Rho(\tau_\eps),\tau_\eps^2(\Rho(\tau_\eps))) 1_{\{\tau_\eps>\Rho(\tau_\eps)\}}\right]\\
\notag&=&\E\left[1_{\{\tau_\eps
\leq\Rho(\tau_\eps)\}}\E_{\tau_\eps}[U(\Rho(\tau_\Rho),\tau_\eps)] +1_{\{\tau_\eps>\Rho(\tau_\eps)\}}\E_{\Rho(\tau_\eps)}[U(\Rho(\tau_\eps),\tau_\eps^2(\Rho(\tau_\eps)))]\right]\\
\notag&\geq&\E\left[1_{\{\tau_\eps
\leq\Rho(\tau_\eps)\}}V^1(\tau_\eps)+1_{\{\tau_\eps>\Rho(\tau_\eps)\}}V^2(\Rho(\tau_\eps))\right]-\eps\\
\notag&\geq&\inf_{\rho\in\T}\E\left[1_{\{\tau_\eps
\leq\rho\}}V^1(\tau_\eps)+1_{\{\tau_\eps>\rho\}}V^2(\rho)\right]-\eps\\
\notag&\geq&\underline V-2\eps,
\end{eqnarray}
where the fifth inequality follows from the definition of $V^1$ in \eqref{e3} and the fact that $\Rho(\tau_\Rho)\geq\tau_\eps$ on $\{\tau_\eps\leq\Rho(\tau_\eps)\}$. By the arbitrariness of $\Rho\in\bT^i$ and $\eps$, the conclusion follows.
\end{proof}

\begin{proof}[\textbf{Proof of \thref{t1}}]
This follows from Lemmas \ref{l1}, \ref{l3} and \ref{l4}.
\end{proof}

\section{Existence of optimal stopping strategies}
If we impose a strong left continuity assumptions on $U$ in addition (see e.g. \cite{Ko2,Ko1,Ko3}), we would get the existence of the optimal stopping strategies for $\overline B$ and $\underline B$. For example let us consider $\overline B$. Indeed, the left and right continuity of $U$ would imply the required left and right continuity of $V^1$ and $V^2$, as well as the existence of an optimal stopping time $\rho_0^1(\tau)\in\T_\tau$ for $V^1(\tau)$. The continuity of $V^1$ and $V^2$ would further imply the existence of an optimal stopping time $\rho_0$ for $\overline V$ (see \cite{Ko3}). Then define
$$\Rho_0(\tau):=\rho_0 1_{\{\tau\geq\rho_0\}}+\rho_0^1(\tau) 1_{\{\tau<\rho_0\}},\quad\tau\in\T.$$
Following the proof of \leref{l3}, one can show that $\Rho_0\in\bT^{ii}$ is optimal for $\overline B$. One should note that in this case $\Rho_0$ may not be in $\bT^i$ as opposed to $\Rho_\eps$ define in \eqref{e5}, this is because here it is possible that $\rho_0(\tau)=\tau$ on $\{\tau<T\}$. 

On the other hand, the existence of optimal stopping strategies for $\overline A$ and $\underline A$ may fail in general even if $U$ is quite regular. For example, let $U(s,t)=|s-t|$. By taking $\Rho(\tau)=\tau$ we have that $\overline B=0$ which is equal to $\overline A$ by Theorem~\ref{t1}. Now assume there exists some optimal $\hat\Rho\in\bT^i$ for $\overline A$. That is,
$$\sup_\tau|\hat\Rho(\tau)-\tau|=\overline A=0.$$
Then we have that $\Rho(\tau)=\tau$ for any $\tau\in[0,T]$, which contradicts with the non-anticipativity condition of Type I  by letting $\sigma_1\neq \sigma_2$ in \eqref{e1}.

\bibliographystyle{siam}
\bibliography{ref}

\end{document}